\title
[Representation fields]
{Representation fields for \\ commutative orders}
\author{\firstname{Luis} \middlename{} \lastname{Arenas-Carmona}}
\address{Universidad de Chile,\\
Departamento de matematicas,\\
Facultad de ciencia,\\
Casilla 653,\\
Santiago, Chile.}
\email{learenas@u.uchile.cl}
\thanks{Supported by Fondecyt, Grant 1100127.}
\keywords{maximal orders, central simple algebras, spinor genera, spinor class
fields}
\subjclass{11R52-11R56-11R37-16G30-16G10}
\theoremstyle{plain}
\newtheorem{propi}{Proposition}[section]
\newtheorem{cor}{Corollary}
\newtheorem{lem}[propi]{Lemma}
\theoremstyle{definition}
\newtheorem{ex}[propi]{Example}
\newtheorem{kasse}{Case}
\newcommand\diag{\textnormal{diag}}
\newcommand\alge{\mathfrak{A}}
\newcommand\oink{\mathcal O}
\newcommand\Da{\mathfrak{D}}
\newcommand\Ha{\mathfrak{H}}
\newcommand\matrici{\mathbb{M}}
\newcommand\finitum{\mathbb{F}}
\newcommand\ad{\mathbb{A}}
\newcommand\Txi{\lceil}
\begin{document}
\begin{abstract}
A representation field for a non-maximal order $\Ha$ in a central
simple algebra is a subfield of the spinor class field of maximal
orders which determines the set of spinor genera of maximal orders
containing a copy of $\Ha$. Not every non-maximal order has a
representation field. In this work we prove that every commutative
order has a representation field and give a formula for it. The
main result is proved for central simple algebras over arbitrary
global fields.
\end{abstract}

\begin{altabstract}
Un corps de repr\'esentation pour un ordre non maximal $\Ha$ dans
une alg\`ebre centrale simple est un sous-corps du corps de
classes espinoriel d'ordres maximaux qui d\'etermine l'ensemble de
genres espinoriels d'ordres maximaux qui contiennent un conjugu\'e
de $\Ha$. Un ordre non maximal ne poss\`ede pas forcement un corps
de repr\'esentation. Dans ce travail nous montrons que chaque
ordre commutatif a un corps de repr\'esentation $F$ et nous
donnons une formule pour $F$. Le r\'esultat principal est prouv\'e
pour des alg\`ebres simples centrales sur des corps globaux
arbitraires.
\end{altabstract}

\maketitle

\section{Introduction}
Let $K$ be a number field. Let $\alge$ be either, a central simple
$K$-algebra of degree $n>2$, or a quaternion algebra satisfying
Eichler condition \cite{FriedmannQ}. The set $\mathbb{O}$ of
maximal orders in $\alge$ can be split into isomorphism classes
or, equivalently, conjugacy classes \cite{reiner}. Let
$\overline{\mathbb{O}}$ be the set of isomorphism classes. In
\cite{spinor} we found a field extension $\Sigma/K$ whose Galois
group $\mathrm{Gal}(\Sigma/K)$ yields a natural free and
transitive action on $\overline{\mathbb{O}}$. For some families of
orders $\Ha$ in $\alge$ we found a field $F=F(\Ha)$ between $K$
and $\Sigma$ such that the set $\overline{\mathbb{O}}_{\Ha}$
 of classes of orders containing
a conjugate of $\Ha$ is exactly one orbit of the subgroup
$\mathrm{Gal}(\Sigma/F)$. In particular, the number of isomorphism
classes of such orders is $[\Sigma:F]$. The first result in this
direction seems to be due to Chevalley who considered the case
when $\alge$ is a matrix algebra and $\Ha$ is the maximal order of
a maximal subfield $L$ \cite{Chevalley}. In our notation,
Chevalley result implies $F(\Ha)=L\cap\Sigma$. In \cite{spinor} we
extended this result to algebras $\alge$ with no partial
ramification.
  Chinburg and Friedman have given a similar result
in which $\Ha$ is an arbitrary rank-two commutative order in a
quaternion algebra $\alge$ satisfying Eichler condition
\cite{FriedmannQ}. In case $L=K\Ha$ is a field, the representation
field $F$ is a subfield of $L\cap\Sigma$ that depends on the
relative discriminant of the order $\Ha$. A more recent article by
Linowitz and Shemanske \cite{Linowitz} extends this result to
central simple algebras of prime degree. Here we extend these
results by proving the existence and providing a formula for the
representation field for every commutative order $\Ha$ in a
central simple algebra $\alge$ of arbitrary degree.

We use the language of spinor genera in all of this work for the sake of generality.
 In this general setting, the set $\overline{\mathbb{O}}$ is not the set of
 conjugacy classes but the set of spinor genera of maximal orders, while
  $\overline{\mathbb{O}}_{\Ha}$ is the set of spinor genera $\Phi$ with at least one order
  $\Da\in\Phi$ containing $\Ha$ as a sub-order. All of the above extends to this setting.
  A single spinor genera can have more than one class only
when $\alge$ is a quaternion algebra failing to satisfy Eichler
condition \cite{spinor}.

\begin{thm}\label{T1}
Let $\alge$ be a central simple algebra over the number field $K$.
Let $\Ha$ be an arbitrary commutative order. Let $\Da$ be a
maximal order containing $\Ha$. For every maximal ideal $\wp$ in
the ring of integers $\oink_K$ we let $I_\wp$ be the only maximal
two-sided ideal of $\Da$ containing $\wp1_{\Da}$, and let
$\mathbb{H}_\wp$ be the image of $\Ha$ in $\Da/I_\wp$. Let
$\mathbb{E}_\wp$ be the center of the ring $\Da/I_\wp$. Let
$t_\wp$ be the greatest common divisor of the dimensions of the
irreducible $\mathbb{E}_\wp$-representations of the algebra
$\mathbb{E}_\wp\mathbb{H}_\wp$. Then the representation field
$F(\Ha)$ is the maximal subfield $F$, of the spinor class field
$\Sigma$, such that the inertia degree $f_\wp(F/K)$ divides
$t_\wp$ for every place $\wp$.
\end{thm}

In the above statement, it is implied that $\mathbb{E}_\wp$ is a
field. In fact, $\mathbb{E}_\wp$ is the residue field of a local
division algebra $E_\wp$ such that
$\alge_\wp\cong\matrici_m(E_\wp)$.
 We show in \S5 that Theorem \ref{T1} is indeed a
 generalization of the results in \cite{spinor}, \cite{FriedmannQ},
 and \cite{Linowitz}. For the sake of generality, we prove a generalization of Theorem
 \ref{T1} that includes also the function field
case studied in \cite{sheaves}. In order to
 consider both cases simultaneously, we introduce the concept of
 A-curve in \S2.

\section{A-curves and Spinor class fields}

In all that follows,
  we say that $X$ is an A-curve with field of
functions $K$, in any of the following cases:
\begin{kasse}\label{kasse1} $X$ is a smooth
irreducible curve over a finite field $\finitum$ and $K$ is the
field of rational functions on $X$.\end{kasse}
\begin{kasse}\label{kasse2}
$X=\mathop{\mathrm{Spec}}\oink_S$ where $\oink_S$ is the ring of
$S$-integers for a non-empty finite set $S$ of places over a
global field $K$ containing the archimedean places.\end{kasse} In
both cases $X$ is provided with its Zariski Topology and its
structure sheaf $\oink_X$. Furthermore, in Case \ref{kasse2} every
$S$-lattice $\Lambda_0$ in a $K$-vector space $V_K$ defines a
locally free sheaf $\Lambda$ by $\Lambda(U)=\oink_X(U)\Lambda_0$,
and the stalk at a place $\wp\in X$ is just the free
$\oink_{[\wp]}$-lattice $\Lambda_{0[\wp]}$, where $\oink_{[\wp]}$
and $\Lambda_{0[\wp]}$ denote the localizations at $\wp$ (as
opposed to completions). The sheaf $\Lambda$ thus defined is
called an $X$-lattice.
 Analogously, we define an $X$-lattice in Case \ref{kasse1} as a locally free
 sheaf of lattices as in \cite{sheaves}. The space $W$ spanned by
 $\Lambda(U)$ is independent
of the affine set $U$. We call $W$ the space generated by
$\Lambda$ and denote it $K\Lambda$. The affine sub-case in Case
\ref{kasse1} is also a sub-case of Case \ref{kasse2}, if we let
$S$ be the complement of $X$ in its smooth projective completion,
and both definitions of $X$-lattice coincide in this case. We say
that $S=\emptyset$ when $X$ is a projective curve over a finite
field.

 Note that the completion $\Lambda_\wp$ of the sheaf $\Lambda$ at a place $\wp\in X$ is
defined as the completion of the stalk $\Lambda_{[\wp]}$, or equivalently the
completion of the lattice $\Lambda(U)$ for any affine set $U\subseteq X$ containing
$\wp$. Since $X$-lattices can be defined by pasting lattices defined on the sets of
an affine cover, some properties of usual lattices (\cite{Om} \S81) are inherited by
$X$-lattices, namely:
\begin{itemize}
\item A lattice $\Lambda$ is determined by it set of completions
$\{\Lambda_\wp\}_\wp$, and it can be modified at a finite number of places to get a
new lattice . \item The adelization $\mathrm{GL}_\ad(V)$ of the group
$\mathrm{GL}(V)$ of $K$-linear maps on $V$ acts on the set of lattices by acting on
the set of completions at every place $\wp$. \item If $V=\alge$ is an algebra, an
$X$-lattice $\Da$ is an order (i.e., a sheaf of orders) if and only if every
completion is an order. The same holds for maximal orders\footnote{This is not in
the reference, but follows easily from the previous results, since a lattice $\Da$
is an order if and only if $1\in\Da$ and $\Da\Da=\Da$.}.
\end{itemize}
 Since any two local maximal orders are conjugate in any
completion $\alge_\wp$ with $\wp\in X$, it follows that for any
two global maximal orders $\Da$ and $\Da'$ there exist an element
$a$ in the adelization $\alge^*_{\mathbb{A}}$ satisfying
$\Da_{\mathbb{A}}'=a\Da_{\mathbb{A}} a^{-1}$, where
$$\Da_{\mathbb{A}}=\prod_{\wp\in S}\alge_\wp\times\prod_{\wp\in X}\Da_\wp$$
and $\Da'_{\mathbb{A}}$ is defined analogously. As usual we write
simply $\Da'=a\Da a^{-1}$, since no other action of adelic points
on orders is considered in this work.
 The orders $\Da$ and $\Da'$ are conjugate if we can choose
$a\in\alge^*$. We say that these orders are in the same spinor
genus if we can choose $a=bc$ where $b\in\alge^*$ and $c_\wp$ has
reduced norm $1$ for every place $\wp\in X$. The following result
is needed in what follows. It is proved in \cite{spinor} for Case
\ref{kasse1} and follows easily from \cite{reiner} (Theorem 33.4
 and  Theorem 33.15), or remark 2.5 in \cite{sheaves},
  for Case \ref{kasse2}.
\begin{quote}
 The set of spinor genera of maximal orders in $\alge$ is in
correspondence with the group $J_K/K^*H(\Da)$, where $J_K$ is the idele group of $K$
and $H(\Da)\subseteq J_K$ is the image under the reduced norm of the
conjugation-stabilizer $(\alge^*_{\mathbb{A}})^{\Da}$.
\end{quote}
By the strong approximation theorem, two orders in the same spinor
genus are always conjugate if the automorphism group of $\alge$ is
non-compact at some place $\wp\in S$. When $K$ is a number field
and $S=\infty$, this condition is equivalent to Eichler condition
if $\alge$ is a quaternion algebra, and it is always satisfied for
algebras of higher dimension. When $X$ is a projective curve we
have $S=\emptyset$, so the condition cannot hold. Note however
that spinor genera of orders over projective curves still carry
some global information that can be recovered in any affine subset
of $X$ \cite{sheaves}. When $X$ is an affine curve, the condition
holds unless every infinite place of $X$ is totally ramified for
$\alge/K$. Since we express our main result in terms of spinor
genera, this result holds in full generality, but it is important
to keep in mind the previous remark in the applications.

 We let $\Sigma$ be the class
field corresponding to the group $K^*H(\Da)$ (\cite{weil}, \S
XIII.9). For every $\rho\in\mathrm{Gal}(\Sigma/K)$, let
$a_\rho\in\alge_{\ad}^*$ be any element satisfying
$\rho=[n(a_\rho),\Sigma/K]$, where $t\mapsto[t,\Sigma/K]$ denotes
the Artin maps on ideles and $n:\alge^*_{\ad}\rightarrow J_K$
denotes the reduced norm.
The action of $\mathrm{Gal}(\Sigma/K)$ on the set
$\overline{\mathbb{O}}$ of spinor genera of maximal orders is
given by $$\rho.\mathrm{spin}(\Da)=\mathrm{spin}(a_\rho\Da
a_\rho^{-1}),\qquad\forall(\rho,\Da)\in\mathrm{Gal}(\Sigma/K)\times\mathbb{O}.$$

Assume in all that follows that $\Ha$ is a suborder of $\Da$. A generator for
$\Da|\Ha$ is an element $u\in\alge^*_{\ad}$ such that $\Ha\subseteq u\Da u^{-1}$.
Let $\Txi(\Da|\Ha)$ the set of reduced norms of generators. There is no reason a
priori for this set to be a group. Note that if $a$ normalizes $\Da$ and $b$
normalizes $\Ha$, then for any generator $u$ the element $bua$ is a generator. In
particular $H(\Da)\Txi(\Da|\Ha)=\Txi(\Da|\Ha)$.
 We conclude that the set
$K^*\Txi(\Da|\Ha)$ is completely determined by its image
$\mathcal{G}(\Da|\Ha)\subseteq\mathrm{Gal}(\Sigma/K)$ under the
Artin map. If $\mathcal{G}(\Da|\Ha)$ is a group we define
 $$F(\Ha)=\big\{a\in\Sigma|g(a)=a\ \forall
 g\in\mathcal{G}(\Da|\Ha)\big\},$$ and call $F=F(\Ha)$ the
 representation field for $\Ha$. It follows from the definition
 that an order $\Da'=b\Da b^{-1}$, for $b\in\alge_\ad^*$, is in the
 same spinor genus than some maximal order containing a conjugate of
 $\Ha$, if and only if $n(b)\in n(u)K^*H(\Da)$ for some generator
 $u$ for $\Da|\Ha$, or equivalently $[n(b),F/K]=\mathrm{id}_F$. We conclude
  that $F$ satisfies the properties stated in \S1. If
 $\mathcal{G}(\Da|\Ha)$ fails to be a group we say that the
 representation field for $\Ha$ is not defined. For examples of
 non-commutative orders for which the representation field is not
 defined, see \cite{counter} or Example 3.6 below.

\section{Spinor image and representations}

Let $K$ be a local field of arbitrary characteristic.
 Assume in all of this
section that $\alge=\mathbb{M}_m(E)$ where $E$ is a central division algebra over
$K$ with ring of integers $\oink_E$, maximal ideal $\mathfrak{m}_E$,
 and residue field $\mathbb{E}=\oink_E/\mathfrak{m}_E$. Let $\pi_E$ and
$\pi_K$ be the uniformizing parameters of $E$ and $K$ respectively. Recall that if
$n:\alge^*\rightarrow K^*$ is the reduced norm, then $n(\pi_E1_{\alge})=v\pi_K^m$
 for some unit $v\in\oink_K^*$.

Let $\Ha\subseteq\Da$ be orders in $\alge$. Then a local generator
$u$, or simply a generator whenever confusion is unlikely, is an
element $u\in\alge^*$ satisfying $\Ha\subseteq u\Da u^{-1}$. In
the rest of this section, we assume that $\Da$ is maximal, but we
make no assumption on $\Ha$ for the sake of generality.
 The purpose of this section is to characterize the set of
reduced norms of local generators in terms of the degrees of the
representations of the reduction of $\Ha$ m\'odulo $\pi_E\Da$.

For short, we let $\stackrel{\rightarrow}1_q$ and $\stackrel{\rightarrow}0_q$ denote
vectors $(1,\dots,1)$ and $(0,\dots,0)$ of length $q$. For example
$(\stackrel{\rightarrow}1_3,2\cdot
\stackrel{\rightarrow}1_2,\stackrel{\rightarrow}0_3)=(1,1,1,2,2,0,0,0)$. For any
ring $A$ we denote by $A^m$ the free $A$-module with $m$ generators, while $A^{*m}$
denotes the set of $m$-powers of invertible elements in $A$.

\begin{lem}\label{l33}
Let $\Ha$ be a suborder of the maximal order $\Da=\matrici_m(\oink_E)$, and let
$\mathbb{H}$ be its image in $\matrici_m(\mathbb{E})$.  Assume that the reduced norm
of a generator $u$ for $\Da|\Ha$ spans the principal ideal
$\Big(n(u)\Big)=(\pi_K^s)=\pi_K^s\oink_K$. Then there exists a flag of
$\mathbb{H}$-modules
$$\{0\}=V_0\subseteq V_1\subseteq\cdots\subseteq V_t=\mathbb{E}^m$$
 and integers $r_1,\dots,r_t$ satisfying $$s=\sum_{i=1}^tr_i\dim_{\mathbb{E}}(V_i/V_{i-1}).$$
\end{lem}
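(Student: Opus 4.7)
The plan is to compare the right $\oink_E$-lattices $M:=\oink_E^m$ and $L:=u\,\oink_E^m$ in $E^m$ via the elementary divisor theorem, and to filter $\mathbb{E}^m\cong M/M\pi_E$ through the images modulo $\pi_E$ of a chain of $\Ha$-stable intermediate lattices indexed by the invariant factors of $L$ relative to $M$.

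\textbf{Step 1 (invariant factors).} The elementary divisor theorem for $\oink_E$-lattices (equivalently, the Cartan decomposition for $\mathrm{GL}_m(E)$) yields a right $\oink_E$-basis $e_1,\dots,e_m$ of $M$ and integers $a_1\leq\cdots\leq a_m$ such that $e_1\pi_E^{a_1},\dots,e_m\pi_E^{a_m}$ is a right $\oink_E$-basis of $L$. Writing $u=g_1\,\diag(\pi_E^{a_1},\dots,\pi_E^{a_m})\,g_2$ with $g_1,g_2\in\Da^*$, and using that $g_1,g_2$ have reduced norm in $\oink_K^*$ while the reduced norm of $\diag(\pi_E^{a_1},\dots,\pi_E^{a_m})$ equals $\pi_K^{\sum_i a_i}$ up to a unit, one obtains $s=\sum_{i=1}^m a_i$.

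\textbf{Step 2 (filtration and bookkeeping).} Let $c_1<\cdots<c_t$ be the distinct values taken by the $a_i$, and set $U_j:=M\cap L\pi_E^{-c_j}$ for $1\leq j\leq t$. The left action of $\alge$ on $E^m$ commutes with the right $E$-scalar action, so each $L\pi_E^{-c_j}$ inherits $\Ha$-stability from $L$; hence the $U_j$ are $\Ha$-stable and form an ascending chain. Since $\pi_E$ generates the maximal two-sided ideal of $\oink_E$, we have $M\pi_E=\pi_E M$ and $\pi_E\Da$ annihilates $M/M\pi_E$, so the $\Ha$-action on $M/M\pi_E\cong\mathbb{E}^m$ factors through $\mathbb{H}$. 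Letting $V_0:=\{0\}$ and $V_j:=(U_j+M\pi_E)/M\pi_E$ for $j\geq 1$ produces a flag of $\mathbb{H}$-submodules. A direct computation in the $e_i$-basis shows $V_j=\bigoplus_{i:\,a_i\leq c_j}\bar e_i\mathbb{E}$; in particular $V_t=\mathbb{E}^m$, and $\dim_{\mathbb{E}}(V_j/V_{j-1})$ equals the multiplicity $m_j$ of $c_j$ in $\{a_1,\dots,a_m\}$. Taking $r_j:=c_j$ gives
$$\sum_{j=1}^t r_j\,\dim_{\mathbb{E}}(V_j/V_{j-1})\;=\;\sum_{j=1}^t c_j m_j\;=\;\sum_{i=1}^m a_i\;=\;s,$$
which is the desired identity.

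\textbf{Anticipated difficulty.} The only delicate point is the $\Ha$-stability of the intermediate lattices. The scalar matrix $\pi_E I_m$ is not central in $\alge$ since $\pi_E$ does not commute with $E$, so a naive left rescaling $\pi_E^{-c_j}L$ would in general fail to be $\Ha$-stable. Performing the shift via the right $\oink_E$-module structure of $L$ circumvents this issue and makes $\Ha$-stability automatic, which is what allows the invariant-factor flag to descend to an $\mathbb{H}$-submodule flag of $\mathbb{E}^m$.
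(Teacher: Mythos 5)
Your proof is correct, and its backbone is the same as the paper's: the elementary-divisor (Cartan) decomposition $u=PDQ$ with $P,Q\in\matrici_m(\oink_E)^*$, the identification $s=\sum_i a_i$ from the reduced norm of the diagonal part, and a flag whose graded pieces have $\mathbb{E}$-dimensions equal to the multiplicities of the invariant factors, with $r_j$ the distinct exponents. Where you genuinely diverge is at the only delicate point, the $\mathbb{H}$-stability of the flag. The paper normalizes $P=1$ (replacing $\Ha$ by $P^{-1}\Ha P$), takes the coordinate subspaces $V_i$, and then must face exactly the difficulty you anticipate: conjugating $h\in\Ha$ by $\pi_E^{r_i}$ reduces not to $\overline h$ but to its twist $\hat h=\sigma(\overline h)$ by the automorphism $\sigma\in\mathrm{Gal}(\mathbb{E}/\mathbb{K})$ induced by $\pi_E$; stability is rescued there by noting that each $V_i$ is spanned by $\mathbb{K}$-rational vectors and is $\sigma$-stable, so $\hat h(\overline v)\in V_i$ if and only if $\overline h(\overline v)\in V_i$. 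Your construction $U_j=M\cap L\pi_E^{-c_j}$, with the shift carried out through the right $\oink_E$-module structure, makes $\Ha$-stability automatic because the left action of $\alge$ commutes with right scalars, and it also keeps $L\pi_E^{-c_j}$ a right $\oink_E$-lattice (conjugation by $\pi_E$ preserves $\oink_E$), so the reductions $V_j$ are honest $\mathbb{E}$-subspaces; the twist argument is bypassed entirely. The two constructions produce the same flag up to the conjugation by $P$ performed at the outset in the paper: the paper's version gives a completely explicit flag in normalized coordinates, while yours gives a coordinate-free stability argument that is shorter and less error-prone. No gap in your argument.
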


\begin{proof}
 Let $P,Q\in\matrici_m(\oink_E)^*$ be such that $u=PDQ$ for a diagonal matrix
$D=\mathrm{diag}(\pi_E^{r_1}\stackrel{\rightarrow}1_{q_1},\dots,
\pi_E^{r_t}\stackrel{\rightarrow}1_{q_t})$, where $r_1<r_2<\cdots<r_t$ and
$q_1+\cdots+q_t=m$. The condition $\Ha\subseteq u\Da u^{-1}$ is equivalent to
$\Ha(u\oink_E^m)\subseteq u\oink_E^m$, or equivalently
$$(P^{-1}\Ha P)D\oink_E^m\subseteq D\oink_E^m.$$ Replacing $\Ha$ by
$P^{-1}\Ha P$ if needed, we can assume that $P$ is the identity. Now we define
$V_i\subseteq\mathbb{E}^n$ as the image of the matrix
$$\mathrm{diag}(\stackrel{\rightarrow}1_{q_1},\dots,
\stackrel{\rightarrow}1_{q_i},\stackrel{\rightarrow}0_{q_{i+1}},\dots,
\stackrel{\rightarrow}0_{q_t})\in\matrici_m(\mathbb{E}).$$
Certainly the flag $V_0\subseteq\dots\subseteq V_t$ satisfies the
last condition. We claim that every $V_i$ is a submodule.
Otherwise, there exists an element $\overline{v}\in V_i$, for some
$i$, and an element $\overline{h}\in \mathbb{H}$ such that
$\overline{h}\overline{v}\notin V_i$. Now take pre-images $h\in
\Ha$ and $v\in\widetilde{V}_i$ of $\overline{h}$ and
$\overline{v}$ respectively, where
$\widetilde{V}_i\subseteq\oink_E^m$ is the image of the matrix
$$\mathrm{diag}(\stackrel{\rightarrow}1_{q_1},\dots,
\stackrel{\rightarrow}1_{q_i},\stackrel{\rightarrow}0_{q_{i+1}},\dots,
\stackrel{\rightarrow}0_{q_t})\in\matrici_m(\oink_E).$$ Then $\pi_E^{r_i}v\in
D\oink_E^m$, whence $h(\pi_E^{r_i}v)\in D\oink_E^m$. Since conjugation by $\pi_E$
leaves $\Da$ invariant, then $\pi_E^{-r_i}h\pi_E^{r_i}$ is an integral matrix with
reduction $\hat h$. Since $h(\pi_E^{r_i}v)\in D\oink_E^m$, then
$$\pi_E^{-r_i}h(\pi_E^{r_i}v)\in
\pi_E^{-r_i}(D\oink_E^m)\cap \oink_E^m\subseteq \widetilde{V}_i+\pi_E\oink_E^m,$$
whence
 $\hat h(\overline{v})\in V_i$. Since
$V_i$ is generated by $V_{i\mathbb{K}}=V_i\cap\mathbb{K}^m$, where $\mathbb{K}$ is
the residue field of $K$, we may assume that $\overline{v}\in\mathbb{K}^m$. Note
that $\hat h$ is the image of $\overline h$ under an automorphism
$\sigma\in\mathrm{Gal}(\mathbb{E}/\mathbb{K})$, and since
$\overline{v}\in\mathbb{K}^m$, we have $\hat{h}(\overline{v})=\sigma[\overline
h(\overline{v})]$. Since $\sigma$ fixes $V_{i\mathbb{K}}$ point-wise, then
necessarily $\sigma(V_i)=V_i$.
 It follows that $\hat h(\overline{v})\in V_i$ if and only if
$\overline h(\overline{v})\in V_i$. This contradicts the choice of $\overline{v}$.
\end{proof}

\begin{lem}\label{l33b}
Let $\Ha$ be a suborder of the maximal order $\Da=\matrici_m(\oink_E)$, and let
$\mathbb{H}$ be its image in $\matrici_m(\mathbb{E})$. Assume that there is an
$\mathbb{H}$-module $V\subseteq\mathbb{E}^m$ of dimension $r$. Then there exists a
local generator $u$ for $\Da|\Ha$ whose reduced norm generates the ideal
$(\pi_K^{m-r})$.
\end{lem}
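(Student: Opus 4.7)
The plan is to invert the analysis of Lemma \ref{l33}: starting from the $\mathbb{H}$-stable subspace $V$, I will construct an $\Ha$-stable lattice $L\subseteq E^m$ and then exhibit a suitable $u\in\alge^*$ with $L=u\oink_E^m$ whose reduced norm generates $(\pi_K^{m-r})$.

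First, let $L$ be the preimage of $V$ under the reduction map $\oink_E^m\to\mathbb{E}^m$, so that $\pi_E\oink_E^m\subseteq L\subseteq\oink_E^m$ and $L/\pi_E\oink_E^m=V$. Since $\mathfrak{m}_E$ is a two-sided ideal of $\oink_E$, the submodule $\pi_E\oink_E^m$ is stable under $\Da$ and hence under $\Ha$; and the induced action of $\Ha$ on $\oink_E^m/\pi_E\oink_E^m=\mathbb{E}^m$ factors through $\mathbb{H}$, which preserves $V$ by hypothesis. It follows that $\Ha L\subseteq L$.

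To realize $L$ as $u\oink_E^m$, I would pick a free $\oink_E$-lift $\tilde V\subseteq\oink_E^m$ of $V$, choose an $\oink_E$-basis $e_1,\dots,e_r$ of $\tilde V$ and extend to a basis $e_1,\dots,e_m$ of $\oink_E^m$. Let $P\in\matrici_m(\oink_E)^*$ be the corresponding change-of-basis matrix, and put $u=PD$ with $D=\diag(\stackrel{\rightarrow}1_r,\pi_E\stackrel{\rightarrow}1_{m-r})$. A direct check shows $u\oink_E^m=L$, so the $\Ha$-stability of $L$ translates into $\Ha(u\oink_E^m)\subseteq u\oink_E^m$, i.e. $\Ha\subseteq u\Da u^{-1}$, exhibiting $u$ as a generator for $\Da|\Ha$.

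It remains to compute $n(u)=n(P)n(D)$. The factor $n(P)$ is a unit in $\oink_K$ because $P\in\Da^*$. For $n(D)$, the key observation is that the $m$ diagonal matrices $M_i=\diag(1,\dots,1,\pi_E,1,\dots,1)$, with $\pi_E$ in position $i$, are mutually conjugate by permutation matrices and hence share a common reduced norm $w\in K^*$; their product equals $\pi_E 1_\alge$, so the given identity $n(\pi_E 1_\alge)=v\pi_K^m$ forces $w^m=v\pi_K^m$, which means $w$ has $K$-valuation $1$. Therefore $(n(D))=(w)^{m-r}=(\pi_K^{m-r})$, as required. The only step that feels delicate is this last norm calculation, but it reduces cleanly to the given formula for $n(\pi_E 1_\alge)$, so I do not anticipate a serious obstacle.
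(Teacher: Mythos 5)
Your proof is correct and takes essentially the same route as the paper: both arguments realize the preimage in $\oink_E^m$ of $V$ as $u\,\oink_E^m$ for an element $u$ with elementary-divisor form $\diag(\stackrel{\rightarrow}1_r,\pi_E\stackrel{\rightarrow}1_{m-r})$ (the paper lifts a factorization $M=PDQ$ of a projection onto $V$, you lift a basis of $V$ and complete it), and then read off the generator property from the $\Ha$-stability of that lattice. Your explicit check that $n(D)$ has valuation $m-r$, via conjugating the factors $\diag(1,\dots,\pi_E,\dots,1)$ by permutation matrices and using $n(\pi_E1_{\alge})=v\pi_K^m$, is valid and merely makes explicit a norm computation the paper leaves implicit.
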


\begin{proof}
 Let $M\in\matrici_m(\mathbb{E})$ be a matrix of rank $r$.
We claim that $M$ has a pre-image $\tilde{M}$ in $\matrici_m(\oink_E)$, satisfying
the following conditions:
\begin{itemize}
\item $\tilde{M}$ is invertible in $\matrici_m(E)$.\item The
reduced norm $n(\tilde{M})$ is a generator of the ideal
$(\pi_K^{m-r})$. \item $\pi_E\tilde{M}^{-1}\in\matrici_m(\oink_E)$
and its reduction $L\in\matrici_m(\mathbb{E})$ satisfies
$\textrm{Im}(M)=\textrm{ker}(L)$.
\end{itemize}
To define the lifting $\tilde{M}$ we write  $M=PDQ$, where $P$ and $Q$ are
invertible in $\matrici_m(\mathbb{E})$ and
$D=\mathrm{diag}(\stackrel{\rightarrow}1_r,\stackrel{\rightarrow}0_{m-r})$,
 then chose arbitrary liftings $\tilde{P}$ and $\tilde{Q}$ in $\matrici_m(\oink_E)^*$,
  and finally set $\tilde{M}=\tilde{P}\tilde{D}\tilde{Q}$,
 where $\tilde{D}=\mathrm{diag}(\stackrel{\rightarrow}1_r,
 \pi_E\stackrel{\rightarrow}1_{m-r})\in\matrici_m(\oink_E)$.  Then
$$\pi_E\tilde{M}^{-1}=(\pi_E\tilde{Q}^{-1}\pi_E^{-1})\Big(
\mathrm{diag}(\pi_E\stackrel{\rightarrow}1_r,\stackrel{\rightarrow}1_{m-r})\Big)
\tilde{P}^{-1}\in\matrici_m(\oink_E).$$ Note that the matrix
$\pi_E\tilde{Q}^{-1}\pi_E^{-1}$ is invertible in $\matrici_m(\oink_E)$, whence the
reduction $L$ of $\pi_E\tilde{M}^{-1}$ has the right rank. The last condition
follows now if we prove $LM=0$, but this is clear since it is the reduction of
$\pi_E\tilde{M}^{-1}\tilde{M}=\pi_E\mathrm{Id}$.

Let $M$ be a projection matrix with image $V$ and let $L$ be as above. Then $M$ has
rank $r$ and satisfies $LXM=0$ for every $X\in\mathbb{H}$, since $X$ leaves
$V=\textrm{Im}(M)$ invariant. We claim that $\tilde{M}$ is a generator for
$\Da|\Ha$, this concludes the proof. Since every element $X\in\mathbb{H}$ leaves the
$\mathbb{E}$-vector space $V=M(\mathbb{E}^m)$ invariant, then every element $x$ of
$\Ha$ must leave invariant its pre-image, i.e., the module
$\tilde{M}(\oink_E^m)+\pi_E\oink_E^m$. Now we have
$$\pi_E\oink_E^m=\tilde{M}(\tilde{M}^{-1}\pi_E\oink_E^m)\subseteq\tilde{M}\oink_E^m,$$
since
$\tilde{M}^{-1}\pi_E=\pi_E^{-1}(\pi_E\tilde{M}^{-1})\pi_E\in\matrici_m(\oink_E)$.
As this implies $x(\tilde{M}\oink_E^m)\subseteq\tilde{M}\oink_E^m$
for any $x\in\Ha$, we conclude that $\tilde{M}$ is a generator.
The result follows.\end{proof}

\begin{lem}\label{l34}
Let $\Ha$ be a suborder of the maximal order $\Da=\matrici_m(\oink_E)$ and let
$\mathbb{H}$ be its image in $\matrici_m(\mathbb{E})$. Assume that every irreducible
representation of the $\mathbb{E}$-algebra $\mathbb{E}\mathbb{H}$ has dimension $d$.
 Then there exists a local generator for
$\Da|\Ha$ whose reduced norm generates $(\pi_K^{s})$ if and only if $d$ divides $s$.
\end{lem}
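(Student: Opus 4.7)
The plan is to prove each implication separately, using Lemma \ref{l33} for the necessity and Lemma \ref{l33b} (combined with a central-twist trick) for the sufficiency. The structural fact that makes everything work is that, since $\mathbb{E}$ is the center of $\matrici_m(\mathbb{E})$, it commutes with $\mathbb{H}$, so every $\mathbb{H}$-submodule of $\mathbb{E}^m$ is automatically an $\mathbb{E}\mathbb{H}$-module. By hypothesis all irreducible $\mathbb{E}\mathbb{H}$-modules have $\mathbb{E}$-dimension $d$, so Jordan--H\"older implies that the $\mathbb{E}$-dimension of every finite $\mathbb{E}\mathbb{H}$-module is a multiple of $d$. In particular $d\mid m$, since $\mathbb{E}^m$ is itself such a module.

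For the ``only if'' direction, assume $u$ is a generator with $n(u)$ generating $(\pi_K^s)$. Lemma \ref{l33} produces a flag $0=V_0\subseteq V_1\subseteq\cdots\subseteq V_t=\mathbb{E}^m$ of $\mathbb{H}$-modules and integers $r_i$ with $s=\sum_i r_i\dim_{\mathbb{E}}(V_i/V_{i-1})$. By the observation above, each quotient $V_i/V_{i-1}$ has $\mathbb{E}$-dimension divisible by $d$, so $d\mid s$.

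For the converse, fix a composition series of $\mathbb{E}\mathbb{H}$-submodules $0=W_0\subset W_1\subset\cdots\subset W_{m/d}=\mathbb{E}^m$, so that $\dim_{\mathbb{E}}W_i=id$. Applying Lemma \ref{l33b} to $W_i$ produces a local generator whose reduced norm generates $(\pi_K^{m-id})$, covering every $s\in\{0,d,2d,\ldots,m\}$. To reach arbitrary $s$ with $d\mid s$, I would use the central-twist trick: the element $z=\pi_E1_{\alge}$ normalizes $\Da=\matrici_m(\oink_E)$ since conjugation by $\pi_E$ is an automorphism of $\oink_E$, so $uz^k$ is a generator whenever $u$ is, for every $k\in\mathbb{Z}$. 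The identity $n(\pi_E1_{\alge})=v\pi_K^m$ recalled at the start of the section shows that twisting by $z^k$ shifts the norm exponent by $km$. Since $d\mid m$, the translates $\{0,d,2d,\ldots,m\}+m\mathbb{Z}$ exhaust $d\mathbb{Z}$, finishing the proof.

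I do not foresee a genuine obstacle here: the hard combinatorial work is already done in Lemmas \ref{l33} and \ref{l33b}. The only subtlety is verifying that $\mathbb{H}$-stable subspaces of $\mathbb{E}^m$ are automatically $\mathbb{E}\mathbb{H}$-stable, which is immediate from the centrality of $\mathbb{E}$, and keeping track of the normalizing element $\pi_E1_{\alge}$ so that the norm lattice obtained is closed under the shift by $m$.
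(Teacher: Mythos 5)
Your proof is correct and follows essentially the same route as the paper: necessity via the flag from Lemma \ref{l33} together with the Jordan--H\"older divisibility of $\mathbb{E}\mathbb{H}$-module dimensions by $d$, and sufficiency via a composition series fed into Lemma \ref{l33b}, with the twist by $\pi_E1_{\alge}$ (norm generating $(\pi_K^m)$, $d\mid m$) handling all $s\in d\mathbb{Z}$ exactly as the paper's reduction to $0\leq s<m$ does. Your explicit remark that $\mathbb{H}$-stable $\mathbb{E}$-subspaces are automatically $\mathbb{E}\mathbb{H}$-modules is a point the paper leaves implicit, but it is the same argument.
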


\begin{proof}
 On one hand, assume $d$ divides $s$. Since conjugation by any power of $\pi_E$ leaves
 $\Da$ invariant and the reduced norm of the diagonal matrix $\pi_E1_{\alge}$ spans the
 ideal $(\pi_K^m)$, we may assume $0\leq s<m$. It suffices to show that there
exists a $\mathbb{H}$-submodule of $\mathbb{E}^m$ of dimension $m-s$. By the
representation theory for finite dimensional algebras, there exists a composition
series
$$\{0\}=V_0\subseteq V_1\subseteq\cdots\subseteq V_{m/d}=\mathbb{E}^m$$ where the submodule
$V_i$ has dimension $di$, and the result follows. On the other
hand, if there exists a local generator for $\Da|\Ha$ whose
reduced norm spans $(\pi_K^{s})$, then there exists a sequence of
submodules $V_1,\dots,V_t$ and integers $r_1,\dots,r_{t}$ as in
Lemma \ref{l33} satisfying
$s=\sum_ir_i\dim_{\mathbb{E}}(V_i/V_{i-1})$. Refining the sequence
if needed, we may assume it is a composition series, whence
$\dim_{\mathbb{E}}(V_i/V_{i-1})=d$ for every $i$, and therefore
$d$ divides $s$.
\end{proof}

Next result is now immediate from the previous lemma:

\begin{lem}\label{lemma34}
 Let $\Ha$ be a suborder of
$\Da=\mathbb{M}_m(\oink_E)$ and let $\mathbb{H}$ be its image in
$\mathbb{M}_m(\mathbb{E})$. Assume that any irreducible representation of the
$\mathbb{E}$-algebra $\mathbb{E}\mathbb{H}$ has the same degree $d$. Then the set of
reduced norms of generators for $\Da|\Ha$ is $\Txi(\Da|\Ha)=K^{*d}\oink_K^*$.\qed
\end{lem}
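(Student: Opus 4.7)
The plan is to derive the statement as a direct refinement of Lemma \ref{l34}: that lemma determines which valuations occur among reduced norms of generators, and upgrading from ``spans the ideal $(\pi_K^s)$'' to ``equals a prescribed element of valuation $s$'' only requires absorbing an arbitrary unit of $\oink_K^*$ into a generator. Writing any $y\in K^*$ as $\pi_K^a u$ with $u\in\oink_K^*$ shows that $K^{*d}\oink_K^*$ is exactly the set of elements of $K^*$ whose $\pi_K$-adic valuation lies in $d\mathbb{Z}$, which is the natural target for the argument.

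The inclusion $\Txi(\Da|\Ha)\subseteq K^{*d}\oink_K^*$ then reads off Lemma \ref{l34}: if $u$ is a generator then $(n(u))=(\pi_K^s)$ with $d\mid s$, so $n(u)=(\pi_K^{s/d})^d\cdot(\text{unit})\in K^{*d}\oink_K^*$. For the reverse inclusion, given $x\in K^{*d}\oink_K^*$ with $(x)=(\pi_K^{ds})$, Lemma \ref{l34} yields a generator $u_0$ with $(n(u_0))=(\pi_K^{ds})$, whence $w:=x/n(u_0)\in\oink_K^*$. I would produce an element $a\in\Da^*$ with $n(a)=w$; since $\Da^*$ fixes $\Da$ under conjugation, $u_0 a$ is again a generator, and multiplicativity of the reduced norm gives $n(u_0 a)=x$.

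The only non-trivial input is the surjectivity of $n:\Da^*\to\oink_K^*$ when $\Da=\matrici_m(\oink_E)$. I would establish this by exhibiting $a=\diag(e,1,\ldots,1)$, whose reduced norm is $n_{E/K}(e)$, and invoking the standard fact that $n_{E/K}:\oink_E^*\to\oink_K^*$ is surjective for a local central division algebra. This is the main technical ingredient of the argument, but it is a well-known feature of local division algebras rather than new work; with it in hand, the remainder is elementary bookkeeping over Lemma \ref{l34}, which is why the result is labeled ``immediate''.
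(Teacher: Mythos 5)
Your proposal is correct and follows exactly the route the paper intends: the paper marks this lemma as immediate from Lemma \ref{l34}, and your filling-in (valuations divisible by $d$ via Lemma \ref{l34}, then adjusting by an element of $\Da^*$ using the standard surjectivity of the reduced norm $\Da^*\to\oink_K^*$ for a maximal order in a local central simple algebra) is precisely the implicit argument. No gaps.
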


The results in this section seem to indicate that the existence of a representation
field depends only on the dimensions of the representations of the algebra
$\mathbb{E}\mathbb{H}$. This is not so, as illustrated by Example \ref{counter}. It
holds, however, when $\Ha$ is the largest order with reduction $\mathbb{H}$ as next
lemma shows.

\begin{lem}\label{bigorder} Let $\Ha=p^{-1}(\mathbb{H})$ be a suborder of
$\Da=\mathbb{M}_m(\oink_E)$, where $\mathbb{H}$ is a subalgebra of
$\mathbb{M}_m(\mathbb{E})$ and
$p:\mathbb{M}_m(\oink_E)\rightarrow\mathbb{M}_m(\mathbb{E})$ the usual projection.
Then there exists a local generator for $\Da|\Ha$ whose reduced norm generates
$(\pi_K^{s})$ if and only if there exists an $\mathbb{H}$-module
$V\subseteq\mathbb{E}^m$ of dimension $t$, where $m-t$ is the remainder of $s$ when
divided by $m$.
\end{lem}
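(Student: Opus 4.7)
My plan is to exploit the fact that since $\Ha=p^{-1}(\mathbb{H})$ contains the kernel $\pi_E\Da$ of the reduction map, any local generator $u$ for $\Da|\Ha$ automatically satisfies $\pi_E\Da\subseteq u\Da u^{-1}$. Following the setup of Lemma \ref{l33}, I write $u=PDQ$ with $D$ a diagonal matrix whose entries have valuations $r_1<\cdots<r_{t'}$ in blocks of sizes $q_1,\ldots,q_{t'}$, and after absorbing $P$ into $\Ha$ by conjugation I reduce to $u\Da u^{-1}=D\Da D^{-1}$. The crucial observation is that the $(i,j)$-entry of any element of $D\Da D^{-1}$ lies in $\pi_E^{\rho_i-\rho_j}\oink_E$, where $\rho_i$ denotes the valuation of the $i$-th diagonal entry of $D$; requiring that every matrix of the form $\pi_E$ times an elementary matrix belong to $D\Da D^{-1}$ forces $\rho_i-\rho_j\le 1$ for all $i,j$, and combined with the strict chain $r_1<\cdots<r_{t'}$ this collapses to $t'\le 2$.

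With this bound in hand, the forward direction becomes a short case analysis. If $t'=1$, then $D=\pi_E^{r_1}1_\alge$ is scalar and $s=r_1m$ is a multiple of $m$; the $\mathbb{H}$-submodule $V=\mathbb{E}^m$ of dimension $m$ then matches the condition in the statement, since $s\bmod m=0$ and the required dimension is $m$. If $t'=2$, then necessarily $r_2=r_1+1$, so $s=r_1q_1+(r_1+1)q_2=r_1m+q_2$ with $1\le q_2\le m-1$; hence $q_2$ is exactly the remainder of $s$ modulo $m$, and the submodule $V_1$ provided by Lemma \ref{l33} has dimension $q_1=m-q_2$, which is precisely the dimension required by the statement.

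For the converse, given an $\mathbb{H}$-submodule $V$ of dimension $t=m-(s\bmod m)$, Lemma \ref{l33b} furnishes a local generator $u_0$ for $\Da|\Ha$ whose reduced norm spans $(\pi_K^{m-t})$. Writing $s=(m-t)+qm$ for the appropriate integer $q$, I set $u=u_0(\pi_E^q 1_\alge)$; since $\pi_E 1_\alge$ normalizes $\Da$, the conjugate maximal order is unchanged, so $u\Da u^{-1}=u_0\Da u_0^{-1}\supseteq\Ha$ and $u$ remains a generator, while its reduced norm spans $(\pi_K^{m-t})(\pi_K^{qm})=(\pi_K^s)$, using the identity $n(\pi_E 1_\alge)=v\pi_K^m$ recalled at the beginning of this section.

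The main obstacle, and the real content of the lemma, is the initial bound $t'\le 2$: this is what genuinely distinguishes the case $\Ha=p^{-1}(\mathbb{H})$ from the general situation of Lemma \ref{l33}, where arbitrarily long flags and arbitrary integer sequences $r_i$ are a priori possible. It is precisely this collapse that allows the multi-step flag condition to be replaced by the existence of a single $\mathbb{H}$-submodule of the prescribed dimension.
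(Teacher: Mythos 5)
Your proposal is correct and takes essentially the same route as the paper: the heart of both arguments is that the containment $\pi_E\Da\subseteq\Ha$ forces the elementary divisors of a generator to take at most two consecutive values (the paper phrases this as the sandwich $\pi_E\oink_E^m\subseteq u\oink_E^m\subseteq\oink_E^m$ after scaling by a power of $\pi_E$, you as the entrywise bound $\rho_i-\rho_j\le 1$ coming from $\pi_E E_{ij}\in D\Da D^{-1}$), after which the required $\mathbb{H}$-module of dimension $t$ is read off from the reduction. Your converse, using Lemma \ref{l33b} and adjusting by powers of $\pi_E 1_{\alge}$, is exactly the step the paper treats as immediate.
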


\begin{proof}
 It suffices to prove that if $u$ is a generator whose reduced
norm generates $(\pi_K^{s})$ then there exists an $\mathbb{H}$-module $V$ of
dimension $t$. Since $u$ is a generator, we have $\Ha(u\oink_E^m)\subseteq
u\oink_E^m$. Post-multiplying $u$ by a power of $\pi_E$ we may assume
$u\oink_E^m\subseteq\oink_E^m$, but $u\oink_E^m$ is not contained in
$\pi_E\oink_E^m$. Since $\Ha$ contains every matrix of the form $\pi_Ea$ with
$a\in\Da$, we conclude that $\pi_E\oink_E^m\subseteq u\oink_E^m$. It follows that
$u=PDQ$ where $P$ and $Q$ are units in $\matrici_m(\oink_E)$ and
$D=\diag(\pi_E\stackrel{\rightarrow}1_r,\stackrel{\rightarrow}1_d)$. Let $V$ be the
image of $u\oink_E^m$ in $\mathbb{E}^m$.  It follows that $\dim_{\mathbb{E}}V=d$
while on the other hand the reduced norm of $u$ generates $(\pi_K^r)$, so that
$r=s$. It follows that $d=m-r=t$, since they are congruent modulo $m$ and $0<d,t\leq
m$.
\end{proof}

\begin{ex}\label{counter} Let $\alge=\matrici_n(K)$ and let
$L=\{a\in\alge|ae_1\in Ke_1\}$, where $\{e_1,\dots,e_n\}$ is the
canonical basis of $K^n$. Let $I$ be an integral ideal, i.e.,
 a sub-lattice of $\oink_X$,
and let $\Ha=\Ha(I)$ be the order
$$\Ha(I)=\Big(L\cap\matrici_n(\oink_X)\Big)+\matrici_n(I),$$
where $\matrici_n(I)$ is the sheaf defined by
$\matrici_n(I)(U)=\matrici_n\Big(I(U)\Big)$ for any affine set $U$ and
$\matrici_n(\oink_X)$ is defined analogously.

The set of reductions $\mathbb{H}_\wp$ depends only on the primes
$\wp$ such that $I_\wp\neq\oink_\wp$, but this is not so for the
set of norms of generators. In fact, if $I_\wp$ is the maximal
ideal of $\oink_\wp$, then Lemma \ref{bigorder} applies and any
local generator at $\wp$ must have reduced norm in $\oink^*_\wp
K_\wp^{*n}$ or $\pi_\wp^{n-1}\oink^*_\wp K_\wp^{*n}$. In
particular, if $I=\wp$ and if $[\wp,\Sigma/K]$ has order at least
$3$ in the Galois group $\mathrm{Gal}(\Sigma/K)$, then the
representation field does not exist.  However, if
$I_\wp=(\pi_\wp^t)$, then
$\diag\left(\pi_\wp^{-s},\stackrel{\rightarrow}1_{n-1}\right)=\diag(\pi_\wp^{-s},1,\dots,1)$
is a generator for any $s\leq t$. In particular, if $t=t(\wp)$ is
big enough for every prime $\wp$ dividing $I$, then the
representation fields is defined.
\end{ex}

\section{Proof of Theorem \ref{T1}}

\begin{lem}\label{l2} Let $R$ be a commutative ring that is complete with respect to an ideal
$J$. Then any idempotent $P\in R/J$ can be lifted to an idempotent
$\tilde{P}\in R$.
\end{lem}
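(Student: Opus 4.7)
The plan is to realize $\tilde P$ as the limit of a Newton-style iteration applied to the polynomial $f(X)=X^2-X$, whose ``derivative'' $2X-1$ is automatically invertible modulo $J$ on any lift of an idempotent (indeed, $(2p-1)^2=1+4(p^2-p)\equiv 1\pmod J$). Concretely, I would pick an arbitrary lift $p_0\in R$ of $P$, set $j_0:=p_0^2-p_0\in J$, and iterate by
\[
p_{n+1}=3p_n^2-2p_n^3,\qquad j_{n+1}:=p_{n+1}^2-p_{n+1}.
\]

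The core computational step, which I would verify by direct substitution using $p_n^2=p_n+j_n$, is the pair of identities
\[
p_{n+1}-p_n=j_n(1-2p_n),\qquad j_{n+1}=j_n^2\bigl(4j_n-3\bigr).
\]
These give a quadratic-convergence estimate: if $j_n\in J^{2^n}$, then $j_{n+1}\in J^{2^{n+1}}$ and $p_{n+1}\equiv p_n\pmod{J^{2^n}}$. Starting from $j_0\in J$, an induction shows that $(p_n)$ is Cauchy in the $J$-adic topology.

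By $J$-adic completeness of $R$, the sequence converges to some $\tilde P\in R$. Passing to the limit in $p_n^2-p_n=j_n$ places $\tilde P^2-\tilde P$ in $\bigcap_{n\ge 0}J^{2^n}$, which vanishes because the completion is Hausdorff; hence $\tilde P^2=\tilde P$, and $\tilde P\equiv p_0\equiv P\pmod J$ as required.

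I do not anticipate a real obstacle: the only non-formal content is the quadratic-convergence identity for $j_{n+1}$, which is a one-line polynomial manipulation, and commutativity of $R$ enters only insofar as it allows monomials in $p_n$ and $j_n$ to be rearranged freely in that expansion.
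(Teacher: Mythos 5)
Your proposal is correct and is essentially the paper's own argument: the iteration $p_{n+1}=3p_n^2-2p_n^3$ is literally the paper's recursion $P_{n+1}=P_n+(1-2P_n)(P_n^2-P_n)$ rewritten, and the identity $j_{n+1}=j_n^2(4j_n-3)$ is the paper's error estimate (stated there as $P_{n+1}^2-P_{n+1}=(P_n^2-P_n)[1-(1-2P_n)^2(1-(P_n^2-P_n))]$), with your version even giving the sharper exponent $2^n$ in place of $n$.
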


\begin{proof}
Let $P_1$ an arbitrary lifting of $P$ to $R$ and define recursively
$P_{n+1}=P_n+(1-2P_n)(P_n^2-P_n)$. Note that
$$P_{n+1}^2-P_{n+1}=(P_n^2-P_n)\Big[1-(1-2P_n)^2[1-(P_n^2-P_n)]\Big].$$
Since $(1-2P_n)^2=1+4(P_n^2-P_n)\equiv 1$ (modulo $J$), we prove
by recursion that $P_n^2-P_n\in J^n$ and the result follows by
completeness.
\end{proof}

\begin{lem}\label{l3} Let $\Ha$ be an commutative order in a local central
simple algebra $\alge$ and let $\mathfrak{P}$ be the sub-order
generated by the idempotents of $\Ha$. Let $C$ be the centralizer
of $\mathfrak{P}$ in $\alge$. Write $\Ha$ as a direct product
$\Ha=\prod_{i\in I}\Ha_i$ with $\Ha_i$ connected. Then we have a
decomposition $C=\prod_{i\in I}C_i$, where every $C_i$ is a
central simple algebra. Furthermore, there exist a family of
maximal orders $\{\Da_i\}_{i\in I}$, with
$\Ha_i\subseteq\Da_i\subseteq C_i$ and a maximal order
$\Da\subseteq\alge$ such that$$\Txi(\Da|\Ha)\supseteq\prod_{i\in
I}\Txi(\Da_i|\Ha_i).$$
\end{lem}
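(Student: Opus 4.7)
The plan is to use the orthogonal idempotents of $\Ha$ to decompose the centralizer, and then assemble local generators of the pieces into a global one. Realize $\alge \cong \mathrm{End}_E(V)$ for a central division algebra $E/K$ and a finite-dimensional $E$-vector space $V$. Let $e_i \in \Ha$ denote the identity element of the factor $\Ha_i$, so that $\{e_i\}_{i\in I}$ is a system of orthogonal idempotents summing to $1$ and $\mathfrak{P} = \bigoplus_i \oink_K e_i$; each $e_i$ is a projection onto a subspace $V_i \subseteq V$, giving $V = \bigoplus_i V_i$. An element of $\alge$ commutes with all the $e_i$ precisely when it preserves this decomposition, so the centralizer is $C = \prod_i \mathrm{End}_E(V_i)$, and each factor $C_i := \mathrm{End}_E(V_i)$ is a central simple $K$-algebra containing $\Ha_i = e_i\Ha$.

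For each $i$ I would pick a maximal order $\Da_i \subseteq C_i$ containing $\Ha_i$ and realize it as $\mathrm{End}_{\oink_E}(L_i)$ for some full $\oink_E$-lattice $L_i \subseteq V_i$. Setting $L := \bigoplus_i L_i$ and $\Da := \mathrm{End}_{\oink_E}(L)$ produces a maximal order of $\alge$ satisfying $e_i \Da e_i = \Da_i$ and $\Da \supseteq \bigoplus_i \Da_i \supseteq \bigoplus_i \Ha_i = \Ha$, so all of the data fit inside a single ambient maximal order.

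Given local generators $u_i \in C_i^*$ for each $\Da_i|\Ha_i$, I would then set $u := \sum_i u_i \in \alge$. Because the $u_i$ live in orthogonal corners $e_i \alge e_i$, the element $u$ is invertible in $\alge$ with inverse $\sum_i u_i^{-1}$, and it preserves the decomposition $V = \bigoplus V_i$. Consequently $u\Da u^{-1} \supseteq \bigoplus_i u_i \Da_i u_i^{-1} \supseteq \Ha$, so $u$ is a generator for $\Da|\Ha$.

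The step that needs genuine checking is the factorization $n_\alge(u) = \prod_i n_{C_i}(u_i)$ of reduced norms, from which the inclusion $\Txi(\Da|\Ha) \supseteq \prod_i \Txi(\Da_i|\Ha_i)$ is immediate. After choosing an $E$-basis of $V$ adapted to $V = \bigoplus V_i$, the element $u$ becomes block-diagonal in $\matrici_n(E)$; extending scalars to a splitting field for $\alge$ turns the reduced norm into the ordinary determinant, which is multiplicative across block-diagonal matrices, and each block contributes $n_{C_i}(u_i)$. Observe that Lemma \ref{l2} plays no role in this argument, since the needed idempotents already live in $\Ha$; presumably that lifting lemma will be invoked later in the proof of Theorem \ref{T1}, when one has to recognize idempotents of $\mathbb{H}_\wp$ as reductions of idempotents of an integral completion.
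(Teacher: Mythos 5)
Your proof is correct and follows essentially the same route as the paper: decompose $V$ via the idempotents $e_i$, identify $C$ with the block-diagonal algebra $\prod_i\mathrm{End}_E(V_i)$, choose $\Da$ and the $\Da_i$ compatibly (the paper does this in coordinates, taking $\Da=\matrici_m(\oink_E)$ and $\Da_i=\matrici_{q_i}(\oink_E)$ after a basis change putting $\Ha_i$ inside $\matrici_{q_i}(\oink_E)$, which is your lattice choice $L=\bigoplus_i L_i$ in matrix form), and conclude by summing generators and using multiplicativity of the reduced norm across blocks. Your side remark about Lemma \ref{l2} is also accurate: the paper invokes it only later, in the proof of the Proposition generalizing Theorem \ref{T1}.
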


\begin{proof}
If $\Ha=\prod_{i\in I}\Ha_i$ as above, we let $P_i$ be the
idempotent corresponding to the factor $\Ha_i$. Then $P_i$ is a
central idempotent in $C$, whence there exists a decomposition of
$C$ as above where $C_i\cong P_iC$. Assume $\alge=\matrici_m(E)$,
where $E$ is a division algebra. Note That $E^m=\prod_iP_iE^m$ as
either left $C$-modules or right $E$-modules. By taking an
$E$-basis of each $P_iE^m$, we obtain a basis of $E^m$ where the
order $\mathfrak{P}$ is the ring of diagonal matrices of the form
$$\left(\begin{array}{cccc}a_1I_{q_1}&0&\cdots&0\\0&a_2I_{q_2}&\cdots&0\\
\vdots&\vdots&\ddots&\vdots\\0&0&\vdots&a_rI_{q_r}\end{array}\right),\qquad
a_1,\dots,a_r\in \oink_K,$$
where $I_q$ is the $q$ by $q$ identity matrix.
 It follows that $C$ is the
algebra of matrices of the form
$$A=\left(\begin{array}{cccc}A_1&0&\cdots&0\\0&A_2&\cdots&0\\
\vdots&\vdots&\ddots&\vdots\\0&0&\vdots&A_r\end{array}\right),
\qquad A_i\in\matrici_{q_i}(E).$$ We can identify $C_i\cong
\matrici_{q_i}(E)$ with the set of matrices $A$, as above, such
that $A_j=0$ for $j\neq i$. In particular $C_i$ is central simple.
  By a suitable change of basis in every $P_iE^m$,
we can assume $\Ha_i\subseteq \matrici_{q_i}(\oink_E)$.
 Now we can choose $\Da=\matrici_m(\oink_E)$ and $\Da_i=\matrici_{q_i}(\oink_E)$ for $i=1,\dots,r$.

 Let $u_i\in C_i^*$ be a generator for
$\Da_i|\Ha_i$ as defined in \S2, for every $i$. Then
$u=\sum_iu_i\in C^*$ is a generator for $\Da|\Ha$, since
$$\Ha=\prod_i\Ha_i\subseteq\prod_i(u_i\Da_iu_i^{-1})=u\left(\prod_i\Da_i\right)u^{-1}\subseteq u\Da
u^{-1}.$$ Furthermore, from the explicit description of $C$ given above we see that
the reduced norm of $u$ is $n(u)=\prod_in(u_i)$. Now the conclusion
follows.\end{proof}

Now we are ready to prove the following generalization of Theorem \ref{T1}.

\begin{propi}
Let $X$ be an A-curve and let $K$ be the field of functions of $X$. Let $\alge$ be a
central simple $K$-algebra. Let $\Ha$ be an arbitrary commutative order in $\alge$.
Let $\Da$ be a maximal order containing $\Ha$. For every place $\wp\in X$ we let
$I_\wp$ be the only maximal two-sided ideal of the completion $\Da_\wp$, and let
$\mathbb{H}_\wp$ be the image of $\Ha$ in $\Da_\wp/I_\wp$. Let $\mathbb{E}_\wp$ be
the center of the ring $\Da_\wp/I_\wp$. Let $t_\wp$ be the greatest common divisor
of the dimensions of the irreducible $\mathbb{E}_\wp$-representations of the algebra
$\mathbb{E}_\wp\mathbb{H}_\wp$. Then the representation field $F(\Ha)$ is the
maximal subfield $F$ of the spinor class field $\Sigma$ such that the inertia degree
$f_\wp(F/K)$ divides $t_\wp$, for every place $\wp$.
\end{propi}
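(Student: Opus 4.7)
The plan is to reduce the proof to a local computation at each place $\wp\in X$: to show that $\Txi(\Da_\wp|\Ha_\wp)$ is precisely the group $K_\wp^{*t_\wp}\oink_{K_\wp}^*$. Once this is established, the representation field $F(\Ha)$ exists because the local groups assemble into a global group $\Txi(\Da|\Ha)$, and its identification with the claimed maximal subfield of $\Sigma$ follows from class field theory applied to the abelian, everywhere unramified extension $\Sigma/K$.

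Fix $\wp$ and write $\alge_\wp\cong\matrici_m(E_\wp)$ with $\Da_\wp=\matrici_m(\oink_{E_\wp})$. By Lemma~\ref{l2}, the primitive idempotents of the commutative algebra $\mathbb{H}_\wp$ lift to orthogonal idempotents of the $\pi_\wp$-adically complete ring $\Ha_\wp$, yielding a decomposition $\Ha_\wp=\prod_i\Ha_{\wp,i}$ with each $\Ha_{\wp,i}$ connected and image $\mathbb{H}_{\wp,i}$ a local commutative subalgebra. When $\mathbb{H}_{\wp,i}$ is local with residue field $\mathbb{F}_i$, every residue field of a factor of the semilocal algebra $\mathbb{E}_\wp\mathbb{H}_{\wp,i}$ is generated as an $\mathbb{E}_\wp$-algebra by an embedded copy of $\mathbb{F}_i$; since these are finite fields, all such composita share a common $\mathbb{E}_\wp$-dimension $d_{\wp,i}$, so every irreducible $\mathbb{E}_\wp$-representation of $\mathbb{E}_\wp\mathbb{H}_{\wp,i}$ has dimension $d_{\wp,i}$. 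Lemma~\ref{lemma34} then gives $\Txi(\Da_{\wp,i}|\Ha_{\wp,i})=K_\wp^{*d_{\wp,i}}\oink_{K_\wp}^*$, and Lemma~\ref{l3} supplies
$$\Txi(\Da_\wp|\Ha_\wp)\supseteq\prod_i K_\wp^{*d_{\wp,i}}\oink_{K_\wp}^*=K_\wp^{*t_\wp}\oink_{K_\wp}^*,$$
using that $\mathbb{E}_\wp\mathbb{H}_\wp=\prod_i\mathbb{E}_\wp\mathbb{H}_{\wp,i}$ and hence $t_\wp=\gcd_{i}d_{\wp,i}$. For the reverse inclusion, Lemma~\ref{l33} associates to any local generator $u$ a flag of $\mathbb{H}_\wp$-submodules of $\mathbb{E}_\wp^m$ satisfying $v_\wp(n(u))=\sum_jr_j\dim_{\mathbb{E}_\wp}(V_j/V_{j-1})$; refining to a composition series expresses each $\dim_{\mathbb{E}_\wp}(V_j/V_{j-1})$ as a sum of dimensions of simple $\mathbb{E}_\wp\mathbb{H}_\wp$-modules, each divisible by $t_\wp$, so $v_\wp(n(u))$ is divisible by $t_\wp$.

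Globally, $\Txi(\Da|\Ha)$ is the restricted adelic product of these local groups and is thus a subgroup of $J_K$, so $\mathcal{G}(\Da|\Ha)\subseteq\mathrm{Gal}(\Sigma/K)$ is a subgroup and $F(\Ha)$ is defined as its fixed field in $\Sigma$. The spinor class field $\Sigma/K$ is abelian and unramified at every place, since the local factor of $H(\Da)$ always contains $\oink_{K_\wp}^*$; therefore any subfield $F\subseteq\Sigma$ is unramified at every $\wp$. Via the local Artin map, requiring $\Txi(\Da|\Ha)$ to act trivially on $F$ becomes the condition $\mathrm{Frob}_\wp^{t_\wp}=\mathrm{id}$ in $\mathrm{Gal}(F_\wp/K_\wp)$, i.e.\ $f_\wp(F/K)\mid t_\wp$, yielding the stated characterization of $F(\Ha)$. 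The principal technical obstacle is the uniform-dimension observation in the second paragraph: the primitive idempotents of $\mathbb{E}_\wp\mathbb{H}_\wp$ may not lift to $\Ha_\wp$, but those of the smaller $\mathbb{H}_\wp$ always do, and this coarser decomposition already brings each factor into the scope of Lemma~\ref{lemma34} because, over finite fields, the compositum $\mathbb{E}_\wp\mathbb{F}_i$ has a well-defined degree over $\mathbb{E}_\wp$ independent of the embedding.
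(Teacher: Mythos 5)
Your proposal is correct and follows essentially the same route as the paper: lift idempotents of the reduction via Lemma \ref{l2} to decompose $\Ha_\wp$ into connected factors whose images are local with finite residue field, apply Lemma \ref{lemma34} together with Lemma \ref{l3} for the inclusion $\Txi_\wp(\Da|\Ha)\supseteq K_\wp^{*t_\wp}\oink_\wp^*$, use Lemma \ref{l33} for the reverse inclusion, and finish by class field theory. The only differences are cosmetic: the paper lifts idempotents from the semisimplification $\overline{\mathbb{H}}_\wp$ and notes each factor's image is a field, whereas you lift the primitive idempotents of $\mathbb{H}_\wp$ directly and make explicit both the finite-field compositum argument giving the uniform dimension $d_{\wp,i}$ and the final unramified-Frobenius translation, steps the paper leaves implicit.
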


\begin{proof}
 We let
$\overline{\mathbb{H}}_\wp$ be the quotient of $\mathbb{H}_\wp$ by its radical. In
particular, $\overline{\mathbb{H}}_\wp=\Ha_\wp/J$ for an ideal $J$ such that
$J^n\subseteq \pi_E\Da$ for some $n$. It follows that $\Ha_\wp$ is complete with
respect to $J$, so any idempotent in $\overline{\mathbb{H}}_\wp$ can be lifted to an
idempotent in $\Ha_\wp$ by Lemma \ref{l2}. We conclude that the image
$\overline{\mathbb{H}}_i$ of every $\Ha_i$ in $\overline{\mathbb{H}}_\wp$, defined
as in last lemma, has no idempotents and therefore $\overline{\mathbb{H}}_i$ is a
field since $\overline{\mathbb{H}}_\wp$ is semisimple. It follows from Lemma
\ref{lemma34} that the image of the local generators for each $\Ha_i$ is
$\Txi_\wp(\Da_i|\Ha_i)=K_\wp^{*d_i}\oink_\wp^*$, where every $d_i$ is the dimension
of the corresponding representation. On the other hand, by Lemma \ref{l33}
 we get the
contention $\Txi_\wp(\Da|\Ha)\subseteq K_\wp^{*t_\wp}\oink_\wp^*$.
Note that
$$K_\wp^{*t_\wp}\oink_\wp^*=\prod_iK_\wp^{*d_i}\oink_\wp^*=\prod_i\Txi_\wp(\Da_i|\Ha_i)\subseteq
\Txi_\wp(\Da|\Ha)\subseteq K_\wp^{*t_\wp}\oink_\wp^*,$$ whence
equality follows. We conclude that
$\Txi_\wp(\Da|\Ha)=K_\wp^{*t_\wp}\oink_\wp^*$ as claimed.
\end{proof}

\section{Applications and examples}

%

\begin{ex}
Let $\alge=\matrici_n(K)$ and let $\mathfrak{T}_n$ be the order of
all upper triangular matrices of the form
$$\left(\begin{array}{cccc}a_1&a_2&\cdots&a_n\\0&a_1&\cdots&a_{n-1}\\
\vdots&\vdots&\ddots&\vdots\\0&0&\cdots&a_1\end{array}\right),\qquad
a_1,\dots,a_n\in\oink_K.$$ Then $\mathbb{H}_\wp$ has only one
irreducible representation of dimension $1$ for all $\wp$. It
follows that $t_\wp=1$ for all $\wp$ and therefore
$F(\mathfrak{T}_n)=K$. We conclude that every maximal order in
$\matrici_n(K)$ contains a copy of the order
$\mathfrak{T}_n$.\end{ex}

 More generally, for an arbitrary commutative suborder $\Ha$, the dimensions of the
irreducible representations of $\Ha$ are the same as the dimensions of the
irreducible representations of the maximal semi-simple suborder $\Ha_s$ of $\Ha$.
Next result follows:

\begin{cor}
If $\Ha$ is an arbitrary commutative order in the algebra $\alge$,
and $\Ha_s$ is the maximal semi-simple suborder of $\Ha$, then
$F(\Ha_s)=F(\Ha)$. In particular, if a maximal order $\Da$
contains a conjugate of $\Ha_s$, then there exists an order $\Da'$
in the spinor genus of $\Da$ satisfying $\Ha\subseteq\Da'$.
\end{cor}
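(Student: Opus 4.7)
The plan is straightforward given the machinery: both assertions reduce cleanly to the main proposition. First I would invoke the remark stated immediately before the corollary---that the dimensions of the irreducible representations of $\Ha$ agree with those of $\Ha_s$---to conclude that the local invariants $t_\wp$ computed from the reductions $\mathbb{H}_\wp$ of $\Ha$ coincide with those computed from the corresponding reductions of $\Ha_s$, at every place $\wp\in X$. (This is standard: any irreducible representation of a finite-dimensional algebra factors through its semi-simple quotient $\Ha/\mathrm{rad}(\Ha)$, which in the commutative case is identified with $\Ha_s$ via Wedderburn's principal theorem.) Since the proposition characterizes the representation field of any commutative order as the largest subfield of $\Sigma$ whose local inertia degrees divide the corresponding $t_\wp$, the identity $F(\Ha)=F(\Ha_s)$ is immediate.

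For the second assertion, I would argue at the level of $\mathrm{Gal}(\Sigma/K)$-orbits on $\overline{\mathbb{O}}$. By the defining property of the representation field, $\overline{\mathbb{O}}_{\Ha}$ is a single orbit of $\mathrm{Gal}(\Sigma/F(\Ha))$ and $\overline{\mathbb{O}}_{\Ha_s}$ is a single orbit of $\mathrm{Gal}(\Sigma/F(\Ha_s))$; by the first part these are orbits of the same subgroup. Since $\Ha_s\subseteq\Ha$, any order containing a conjugate of $\Ha$ automatically contains the corresponding conjugate of $\Ha_s$, giving the inclusion $\overline{\mathbb{O}}_{\Ha}\subseteq\overline{\mathbb{O}}_{\Ha_s}$. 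Two orbits of the same group are either disjoint or equal, so this inclusion must be an equality. Hence, when $\Da$ is a maximal order containing a conjugate of $\Ha_s$, its spinor genus lies in $\overline{\mathbb{O}}_{\Ha_s}=\overline{\mathbb{O}}_{\Ha}$, producing some $\Da'$ in the spinor genus of $\Da$ with $\Ha\subseteq\Da'$.

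There is no serious obstacle here, as the author's ``Next result follows'' already suggests. The only mildly delicate step is upgrading the evident inclusion $\overline{\mathbb{O}}_{\Ha}\subseteq\overline{\mathbb{O}}_{\Ha_s}$ to an equality, and this is precisely the point at which the first part of the corollary---the identity of the two representation fields---must be invoked.
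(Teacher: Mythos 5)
Your proposal is correct and follows the paper's (implicit) argument: the corollary is deduced exactly as you do, from the preceding remark that the irreducible representation dimensions of $\Ha$ and $\Ha_s$ agree (hence the local invariants $t_\wp$ agree), combined with the Proposition of \S4, and the ``in particular'' clause is just the defining property of the representation field, which makes $\overline{\mathbb{O}}_{\Ha}$ and $\overline{\mathbb{O}}_{\Ha_s}$ orbits of the same subgroup $\mathrm{Gal}(\Sigma/F)$, forced to coincide by the inclusion $\overline{\mathbb{O}}_{\Ha}\subseteq\overline{\mathbb{O}}_{\Ha_s}$ and the (obvious) nonemptiness of $\overline{\mathbb{O}}_{\Ha}$. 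No gaps beyond what the paper itself leaves to the reader.
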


Recall that the inertia degree $f_\wp(F/K)$ of an unramified abelian extension $F/K$
divides the inertia degree $f_\wp(L/K)$ of an arbitrary extension $L/K$, at every
$\wp$, if and only if $F\subseteq L$. Next result follows (compare to Theorem 4.3.4
in \cite{spinor}):

\begin{cor}\label{sse}
If $\alge$ has no partial ramification outside of $S$ and
$\Ha=\prod_i\Ha_i$, where each $\Ha_i$ is the maximal order of a
field $L_i$, then
$F(\Ha)=\Sigma\cap\left(\bigcap_iL_i\right)$.\qed
\end{cor}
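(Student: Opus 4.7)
The plan is to feed the characterization of $F(\Ha)$ from the preceding proposition into the class-field-theoretic criterion recalled just above the statement. The first step is to compute $t_\wp$ at each place $\wp\in X$. The hypothesis of no partial ramification outside $S$ forces $\alge_\wp$ at every $\wp\in X$ to be either split over $K_\wp$ (so $\mathbb{E}_\wp=\mathbb{K}_\wp$) or a central division algebra (so $m=1$ and $\Da_\wp/I_\wp=\mathbb{E}_\wp$ is a field). In the division case, $\mathbb{H}_\wp$ is a subfield of $\mathbb{E}_\wp$, so $\mathbb{E}_\wp\mathbb{H}_\wp=\mathbb{E}_\wp$ admits only the trivial irreducible representation and $t_\wp=1$.

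In the split case $\alge_\wp=\matrici_n(K_\wp)$, I would decompose $\Ha_\wp$ using the central idempotents $e_{i,\mathfrak{P}}$ associated to the primes $\mathfrak{P}$ of $L_i$ above $\wp$. Being idempotents, these lie in $\Ha_\wp$, and their images in $\matrici_n(\mathbb{K}_\wp)$ are nonzero pairwise orthogonal projections, so they induce a decomposition $\mathbb{H}_\wp=\prod_{i,\mathfrak{P}}\mathbb{H}_{i,\mathfrak{P}}$ in which each $\mathbb{H}_{i,\mathfrak{P}}$ is a quotient of the Artinian local ring $\oink_{L_i,\mathfrak{P}}/\pi_\wp\oink_{L_i,\mathfrak{P}}$. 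Since any proper ideal of a local ring is contained in its maximal ideal, the residue field of $\mathbb{H}_{i,\mathfrak{P}}$ is again $\mathbb{F}_\mathfrak{P}$, so the irreducible $\mathbb{E}_\wp$-representations of $\mathbb{E}_\wp\mathbb{H}_\wp$ have dimensions $f_\mathfrak{P}(L_i/K)$, giving $t_\wp=\gcd_{i,\,\mathfrak{P}\mid\wp}f_\mathfrak{P}(L_i/K)$.

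Next, I would observe that at every division place and at every place of $S$, the spinor class field $\Sigma$ splits completely, because there the entire local algebra normalizes $\Da_\wp$ and the reduced norm is surjective onto $K_\wp^*$. Hence $f_\wp(F/K)=1$ for every $F\subseteq\Sigma$ at these places, and the divisibility $f_\wp(F/K)\mid t_\wp$ is automatic. The only nontrivial conditions remain at the split places, where they read $f_\wp(F/K)\mid f_\mathfrak{P}(L_i/K)$ for every $i$ and every $\mathfrak{P}\mid\wp$.

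By the criterion recalled just above the statement, for each fixed $i$ the divisibility $f_\wp(F/K)\mid f_\mathfrak{P}(L_i/K)$ at every $\wp$ and every $\mathfrak{P}\mid\wp$ is equivalent to $F\subseteq L_i$; combining over $i$ and intersecting with the constraint $F\subseteq\Sigma$ yields $F(\Ha)=\Sigma\cap\bigcap_iL_i$. The principal obstacle is the semisimplification argument in the split case; once the residue field of each local factor $\mathbb{H}_{i,\mathfrak{P}}$ is identified with $\mathbb{F}_\mathfrak{P}$, the remainder is class field theory together with the quoted divisibility criterion.
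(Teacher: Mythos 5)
Your argument is correct and is essentially the proof the paper intends: you compute $t_\wp$ at split places via the idempotent decomposition of $\Ha_\wp$ into local factors with residue fields $\mathbb{F}_\mathfrak{P}$, note $t_\wp=1$ (hence no constraint) at totally ramified places, and then apply the quoted divisibility criterion place by place to each $L_i$. The only blemish is your side remark about places of $S$: the reduced norm need not be surjective at a ramified archimedean place, but this is harmless since the criterion of Theorem \ref{T1} only involves the places $\wp\in X$.
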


\begin{ex}
If $\Ha$ is the maximal order in a semi-simple commutative algebra
of prime dimension that is not a field, then the greatest common
divisor of the degrees $[L_i:K]$ is $1$, and therefore $\Ha$ is
contained in at least one order in every spinor genus.
\end{ex}

\begin{ex}
If $\Ha=\oink_K\times\Ha_1$, then $\Ha$ is contained in at least one order in every
spinor genus.
\end{ex}

Recall that a split order in $\matrici_n(K)$ is an order
containing a copy of $\oink_K^n=\oink_K\times\cdots\times\oink_K$
\cite{split}. By applying last example to $\Ha=\oink_K^n$,  we
obtain a simple proof of the following generalization of Theorem 5
in \cite{Chevalley}.
\begin{cor}
Every spinor genera of maximal $X$-orders in $\matrici_n(K)$ contains an split
order. If $X$ is affine, then every maximal order is split.\qed
\end{cor}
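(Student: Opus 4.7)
The natural approach is to apply the preceding Example to $\Ha = \oink_K^n$, embedded as the diagonal subring of $\matrici_n(K)$. Writing $\oink_K^n \cong \oink_K \times \oink_K^{n-1}$ puts $\Ha$ in the form covered by that Example, so $\Ha$ is contained in at least one maximal order in every spinor genus; this is precisely the statement that every spinor genus contains a split order, establishing the first assertion.

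Equivalently, one can reach the same conclusion directly from Theorem~\ref{T1}. At each place $\wp$ the image $\mathbb{H}_\wp$ is the diagonal subalgebra $\mathbb{E}_\wp^n \subseteq \matrici_n(\mathbb{E}_\wp) \cong \Da_\wp/I_\wp$, whose irreducible $\mathbb{E}_\wp$-representations are the $n$ coordinate projections, all of dimension one. Hence $t_\wp = 1$ for every $\wp$, and $F(\Ha)$ must be the maximal subfield of $\Sigma$ in which every place has trivial Frobenius. Since $\Sigma/K$ is a finite abelian extension, Chebotarev density then forces $F(\Ha) = K$, so $\mathrm{Gal}(\Sigma/K)$ acts transitively and $\overline{\mathbb{O}}_\Ha$ exhausts $\overline{\mathbb{O}}$.

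For the second assertion, assume $X$ is affine, so that $S$ is nonempty. Since $\alge = \matrici_n(K)$ is unramified everywhere, its automorphism group is non-compact at every place of $S$, so the strong approximation remark from \S2 identifies each spinor genus of maximal orders with a single conjugacy class. The property of containing a conjugate of $\oink_K^n$ is manifestly invariant under conjugation of the ambient order, so once each spinor genus contains at least one split representative, every order in that class is itself split. Combining with the first assertion yields the claim.

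No serious obstacle is expected; the argument is almost entirely a matter of unpacking earlier results. The only point meriting explicit care is the passage from \emph{every spinor genus contains a split order} to \emph{every maximal order is split}, which rests on the conjugation-invariance of splitness together with the applicability of strong approximation in the matrix-algebra case. Both are immediate from the material of \S2, but both deserve explicit mention.
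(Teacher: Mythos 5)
Your proposal is correct and follows the paper's own route: the first assertion is exactly the application of the preceding example to $\Ha=\oink_K^n$ (equivalently, $t_\wp=1$ for all $\wp$ so $F(\Ha)=K$), and the second assertion is the intended combination of strong approximation for the everywhere-split algebra $\matrici_n(K)$ when $S\neq\emptyset$ with the conjugation-invariance of being split. Your explicit treatment of that last step simply spells out what the paper leaves implicit in its \qed.
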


An order $\Ha$ in $\alge$ is called non-selective if for every spinor genus $\Phi$
of maximal orders, there exists an order $\Da\in\Phi$ containing $\Ha$. An order
$\Ha\subseteq\alge$ is selective if it is not non-selective. This is the natural
generalization of the concept of selective order defined in \cite{FriedmannQ} and
\cite{Linowitz}. Assume now $[L:K]=p$ is a prime, and $\alge$ has no partial
ramification, so if $\mathfrak{L}$ is the maximal order of $L$, Corollary \ref{sse}
applies and $F(\mathfrak{L})=\Sigma\cap L$. Since $[L:K]$ is a prime, the order
$\mathfrak{L}$ is selective if and only if $L\subseteq\Sigma$. In this case the
order is contained in $1/p$ of all conjugacy classes. Assume this is the case and
let $\Ha\subseteq\mathfrak{L}$ be an arbitrary suborder. Then $\Ha$ is non-selective
if and only if there is a place $\wp$ that is inert for $L/K$ such that the image of
$\Ha$ in the residue field $\mathbb{L}_\wp$ coincide with the residue field
$\mathbb{K}_\wp$ of $K$, since in this case $t_\wp=1$, so $F(\Ha)$ splits at $\wp$,
and therefore $F(\Ha)=K$. Since the condition is equivalent to $\wp$ dividing the
relative discriminant of $\Ha/\oink_K$, next result follows:
\begin{cor}
Let $\alge$ be a central simple algebra without partial
ramification and let $\mathfrak{L}$ be the maximal order of a
field $L$ of prime degree over $K$. Then $\mathfrak{L}$ is
selective in $\alge$ if and only if $L\subseteq\Sigma$. In this
case a suborder $\Ha\subseteq\mathfrak{L}$ is selective unless
there is a inert place of $L/K$ dividing the relative discriminant
of $\Ha$ over $\oink_K$.
\end{cor}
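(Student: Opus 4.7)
The first claim is immediate from Corollary \ref{sse} and the primality of $[L:K]$: the corollary gives $F(\mathfrak{L})=\Sigma\cap L$, which lies in $\{K,L\}$ since $L/K$ has no proper intermediate field, and $\mathfrak{L}$ is selective iff $F(\mathfrak{L})\neq K$, iff $L\subseteq\Sigma$.

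Assume now $L\subseteq\Sigma$ and let $\Ha\subseteq\mathfrak{L}$. Any generator for $\Da|\mathfrak{L}$ is a fortiori one for $\Da|\Ha$, so $\mathcal{G}(\Da|\mathfrak{L})\subseteq\mathcal{G}(\Da|\Ha)$ and therefore $F(\Ha)\subseteq F(\mathfrak{L})=L$; by primality $F(\Ha)\in\{K,L\}$. Hence $\Ha$ is non-selective exactly when $F(\Ha)=K$, which by the main Proposition of Section 4 happens iff there is a place $\wp$ with $f_\wp(L/K)\nmid t_\wp$. Since $f_\wp(L/K)\in\{1,p\}$, any such $\wp$ must be inert in $L/K$ with $p\nmid t_\wp$. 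A preliminary observation narrows the candidates further: at every place $\wp$ where $\alge_\wp$ is a division algebra, the normalizer of the unique maximal order is all of $\alge_\wp^*$ and the local reduced norm is surjective, so $H(\Da)_\wp=K_\wp^*$; thus $\wp$ splits completely in $\Sigma$ and a fortiori in $L$. Consequently at every inert place $\wp$ of $L/K$, $\alge_\wp\cong\matrici_m(K_\wp)$ is split, $\mathbb{E}_\wp=\mathbb{K}_\wp$, and the image of $\mathfrak{L}_\wp$ in $\Da_\wp/I_\wp=\matrici_m(\mathbb{K}_\wp)$ is (up to conjugation) the canonical copy of the residue field $\mathbb{L}_\wp=\mathfrak{L}_\wp/\pi_\wp\mathfrak{L}_\wp$, a field of degree $p$ over $\mathbb{K}_\wp$.

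At such an inert $\wp$, $\mathbb{H}_\wp$ is a subring of $\mathbb{L}_\wp$; since $\mathbb{L}_\wp$ is a finite field and $[\mathbb{L}_\wp:\mathbb{K}_\wp]=p$ is prime, $\mathbb{H}_\wp$ is either $\mathbb{K}_\wp$ or $\mathbb{L}_\wp$. In the first case $\mathbb{E}_\wp\mathbb{H}_\wp=\mathbb{K}_\wp$ has its unique irreducible representation of dimension $1$, so $t_\wp=1$; in the second $\mathbb{E}_\wp\mathbb{H}_\wp=\mathbb{L}_\wp$ viewed inside $\matrici_m(\mathbb{K}_\wp)$ is a field of $\mathbb{K}_\wp$-degree $p$, whose only irreducible $\mathbb{K}_\wp$-representation has dimension $p$, so $t_\wp=p$. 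Finally, Nakayama applied to the finitely generated $\Ha_\wp$-module $\mathfrak{L}_\wp$ (using that $\pi_\wp$ lies in the maximal ideal of the local ring $\Ha_\wp$) shows that the reduction of $\Ha_\wp$ in $\mathbb{L}_\wp$ equals $\mathbb{L}_\wp$ iff $\Ha_\wp=\mathfrak{L}_\wp$, and since $\mathfrak{d}(\mathfrak{L}/\oink_K)_\wp$ is a unit (as $\wp$ is unramified in $L/K$), the transitivity formula
\[
\mathfrak{d}(\Ha/\oink_K)_\wp=[\mathfrak{L}_\wp:\Ha_\wp]^2\cdot\mathfrak{d}(\mathfrak{L}/\oink_K)_\wp
\]
gives $\wp\mid\mathfrak{d}(\Ha/\oink_K)$ iff $\Ha_\wp\neq\mathfrak{L}_\wp$. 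Chaining these equivalences yields the stated criterion. The main obstacle I expect is the preliminary observation that rules out division-algebra local components at inert places; without it, the residue algebra $\Da_\wp/I_\wp$ could fail to be a matrix algebra over the prime residue field, and the clean dichotomy $t_\wp\in\{1,p\}$ on which the discriminant criterion rests would collapse.
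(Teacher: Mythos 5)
Your route is essentially the paper's: part one is Corollary \ref{sse} plus primality, and part two combines $F(\Ha)\subseteq F(\mathfrak{L})=L$ with the formula of the Proposition in \S 4, a residue computation at inert places, and the transitivity formula for discriminants (the paper leaves the last two points implicit, and your observation that places where $\alge_\wp$ is a division algebra split completely in $\Sigma$, hence in $L$, is correct and is indeed what makes the no-partial-ramification hypothesis do its work).

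There is, however, one genuine gap in the local step. You treat $\mathbb{H}_\wp$ as a subring of the reduction $\mathbb{L}_\wp$ of $\mathfrak{L}_\wp$ inside $\Da_\wp/I_\wp$, which presupposes $\mathfrak{L}_\wp\subseteq\Da_\wp$; but the Proposition only provides a maximal order containing $\Ha$, and a maximal order containing $\Ha_\wp$ need not contain $\mathfrak{L}_\wp$. In that situation ``the image of $\mathfrak{L}_\wp$ in $\Da_\wp/I_\wp$'' is not even defined, and $\mathbb{H}_\wp$ can fail to be a field: for $p=2$, $\Ha_\wp=\oink_\wp+\pi_\wp\mathfrak{L}_\wp$ and $\Da_\wp=\mathrm{End}_{\oink_\wp}(\Ha_\wp)$ (a maximal order containing $\Ha_\wp$ but not $\mathfrak{L}_\wp$, since $\omega\cdot 1\notin\Ha_\wp$), a direct computation in the basis $\{1,\pi_\wp\omega\}$ gives $\mathbb{H}_\wp\cong\mathbb{K}_\wp[\epsilon]$ with $\epsilon^2=0$. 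The dichotomy $t_\wp\in\{1,p\}$ and your final criterion are still correct, but they need one of two repairs: (i) fix from the start a maximal order $\Da\supseteq\mathfrak{L}\supseteq\Ha$ -- legitimate because the Proposition applies to any maximal order containing $\Ha$, so $F(\Ha)$ does not depend on this choice, and this choice is also what makes your inclusion $\Txi(\Da|\mathfrak{L})\subseteq\Txi(\Da|\Ha)$ meaningful, since $\Txi(\Da|\mathfrak{L})$ presupposes $\mathfrak{L}\subseteq\Da$; or (ii) argue intrinsically: $\Ha_\wp$ is a local ring with maximal ideal $\Ha_\wp\cap\pi_\wp\mathfrak{L}_\wp$, whose image in $\Da_\wp/I_\wp$ consists of nilpotent elements (a power of it lands in $\pi_\wp\Ha_\wp\subseteq I_\wp$ by the conductor), so every irreducible representation of $\mathbb{E}_\wp\mathbb{H}_\wp$ factors through the residue field $k$ of $\Ha_\wp$ and $t_\wp=[k:\mathbb{K}_\wp]$; your Nakayama argument then says $k=\mathbb{L}_\wp$ exactly when $\Ha_\wp=\mathfrak{L}_\wp$, and the discriminant step finishes as you wrote. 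With either repair the proof is complete and agrees with the paper's argument in the paragraph preceding the corollary.
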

Note that we no longer require that $\dim_K\alge=p^2$ in the above
result (see \cite{FriedmannQ} and \cite{Linowitz}). More
generally, the field $L$ always contains $F$, whence we have next
result:

\begin{cor}
Let $\alge$ be a central simple algebra without partial ramification and let $\Ha$
be an order contained in a field $L\subseteq\alge$. Let $N$ be the total number of
spinor genera. Then
$$\sharp\Big\{\Phi\in\overline{\mathbb{O}}\Big|\exists \Da\in\Phi\textnormal{ such that
}\Ha\subseteq\Da\Big\}=\frac1d,$$
 for some divisor $d$ of $[L:K]$.\qed
\end{cor}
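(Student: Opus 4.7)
The plan is to reduce the counting to the representation-field index and then to bound that index by $[L:K]$ via a simple monotonicity argument. More precisely, from Section 2 we know that the set $\overline{\mathbb{O}}_{\Ha}$ of spinor genera containing a conjugate of $\Ha$ is exactly one orbit of $\mathrm{Gal}(\Sigma/F(\Ha))$ inside the free and transitive action of $\mathrm{Gal}(\Sigma/K)$ on $\overline{\mathbb{O}}$ (the representation field exists by the main proposition since $\Ha$ is commutative). Hence
\[
\#\overline{\mathbb{O}}_{\Ha}\;=\;[\Sigma:F(\Ha)]\;=\;\frac{N}{[F(\Ha):K]},
\]
so setting $d=[F(\Ha):K]$ it suffices to show that $d$ divides $[L:K]$, which will be immediate once we verify $F(\Ha)\subseteq L$.

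To prove $F(\Ha)\subseteq L$, I would first compare $\Ha$ with the maximal order $\mathfrak{L}$ of $L$ inside $\alge$. Since $L$ is a field, $\mathfrak{L}$ is a commutative order, and $\Ha\subseteq\mathfrak{L}$. The key monotonicity observation is that any (local or adelic) generator $u$ for $\Da|\mathfrak{L}$, i.e., an element satisfying $\mathfrak{L}\subseteq u\Da u^{-1}$, automatically satisfies $\Ha\subseteq u\Da u^{-1}$, so $u$ is also a generator for $\Da|\Ha$. Thus $\Txi(\Da|\mathfrak{L})\subseteq\Txi(\Da|\Ha)$, hence $\mathcal{G}(\Da|\mathfrak{L})\subseteq\mathcal{G}(\Da|\Ha)$ as subsets of $\mathrm{Gal}(\Sigma/K)$, and passing to fixed fields reverses the inclusion to give $F(\Ha)\subseteq F(\mathfrak{L})$.

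Next I invoke Corollary \ref{sse} applied to the single-factor product $\mathfrak{L}$: since $\alge$ has no partial ramification, $F(\mathfrak{L})=\Sigma\cap L\subseteq L$. Combining with the previous step yields $F(\Ha)\subseteq L$, so $d=[F(\Ha):K]$ divides $[L:K]$, and the counting formula $\#\overline{\mathbb{O}}_{\Ha}=N/d$ follows.

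No real obstacle is anticipated: the only non-formal input is the monotonicity of the generator sets under inclusion of suborders, which is essentially tautological from the definition in Section 2, and the identification $F(\mathfrak{L})=\Sigma\cap L$ which is already available as Corollary \ref{sse}. The whole argument is a short consequence of the representation-field framework, once one notices that enlarging the suborder shrinks the set of generators and hence enlarges the representation field.
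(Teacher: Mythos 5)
Your proposal is correct, and it pivots on the same point as the paper: everything reduces to the containment $F(\Ha)\subseteq L$, after which $d=[F(\Ha):K]$ divides $[L:K]$ and the count $\sharp\overline{\mathbb{O}}_{\Ha}=[\Sigma:F(\Ha)]=N/d$ (i.e.\ the proportion $1/d$, which is clearly what the displayed formula intends) follows from the free and transitive Galois action. Where you genuinely diverge is in how that containment is obtained. The paper simply asserts that $L$ always contains $F$, the intended justification being Theorem \ref{T1} applied directly to $\Ha$: one checks that $t_\wp$ divides the relevant local degrees of $L$ (the same residue-field computation carried out just before for the prime-degree case) and then invokes the inertia-degree criterion recalled before Corollary \ref{sse}. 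You instead make a purely formal reduction to the maximal order $\mathfrak{L}$ of $L$: since $\Ha\subseteq\mathfrak{L}$, every generator for $\Da|\mathfrak{L}$ is tautologically a generator for $\Da|\Ha$, so $\Txi(\Da|\mathfrak{L})\subseteq\Txi(\Da|\Ha)$, hence $\mathcal{G}(\Da|\mathfrak{L})\subseteq\mathcal{G}(\Da|\Ha)$ and, both groups being defined because both orders are commutative, $F(\Ha)\subseteq F(\mathfrak{L})=\Sigma\cap L$ by the one-factor case of Corollary \ref{sse}. This buys a proof with no further local representation theory, at the cost of routing through Corollary \ref{sse} (itself a consequence of Theorem \ref{T1}); the paper's route treats $\Ha$ directly via the $t_\wp$ formula. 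One small point you should make explicit: pick the maximal order $\Da$ to contain $\mathfrak{L}$ (hence $\Ha$), so the two generator sets are compared inside the same $\Da$; this is harmless because the representation field does not depend on that choice.
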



\nocite{*}
\bibliographystyle{cdraifplain}

\end{document}